\newtheorem{theo}{Theorem}[section]
\newtheorem{cor}[theo]{Corollary}
\newtheorem{rem}[theo]{Remark}
\newcommand{\mc}{\mathcal}
\newcommand{\mb}{\mathbb}
\newcommand{\Honezero}{H^1_0(\Omega)}
\newcommand{\Ltwo}{L^{2}(\Omega)}
\newcommand{\ra}{\rightarrow}
\newcommand{\f}{\frac}
\newcommand{\ball}{\overline{B}(\hat{u},\rho)}
\newcommand{\oballrho}{B(\hat{u},\rho)}
\newcommand{\ten}[1]{\times 10^{#1}}
\journal{Elsevier}
\begin{document}
	
	\begin{frontmatter}
		
		\title{Numerical verification method for positive solutions of elliptic problems}

		\author[Waseda-ims]{Kazuaki Tanaka}
		\cortext[mycorrespondingauthor]{Corresponding author}
		\ead{tanaka@ims.sci.waseda.ac.jp}
	
		\address[Waseda-ims]{Institute for Mathematical Science, Waseda University, 3-4-1, Okubo Shinjyuku-ku, Tokyo 169-8555, Japan}

		\begin{abstract}
			The purpose of this paper is to propose methods for verifying the positivity of a weak solution $ u $ of an elliptic problem assuming $ H^1_0 $-error estimation $ \left\|u-\hat{u}\right\|_{H_{0}^{1}} \leq \rho $ given some numerical approximation $ \hat{u} $ and an explicit error bound $ \rho $.
			We provide a sufficient condition for the solution to be positive and analyze the range of application of our method for elliptic problems with polynomial nonlinearities.
			We present numerical examples where our method is applied to some important problems.
		\end{abstract}
		
		\begin{keyword}
			Computer-assisted proof \sep
			Elliptic problems \sep
			Newton's method \sep
			Numerical verification \sep
			Positive solutions \sep
			Verified numerical computation
			\MSC[2010] 35J61\sep  65N15
		\end{keyword}
		
	\end{frontmatter}
	

	\section{Introduction}
	\label{sect:1}
	Over the last several decades, numerous studies have been conducted on the semilinear elliptic equations
	\begin{align}
	\label{mainpro}
	-\Delta u(x)=f(u(x)), ~~x \in \Omega,
	\end{align}
	with appropriate boundary conditions. One such example is the Dirichlet problem
	\begin{align}
	\label{mainpro:d}
	\left\{\begin{array}{l l}
	-\Delta u(x)=f(u(x)), &x \in \Omega,\\
	u(x)=0,  &x \in \partial\Omega,\\
	\end{array}\right.
	\end{align}
	where $\Omega \subset \mathbb{R}^{N}$~$(N=1,2,3,\cdots)$ is a bounded domain,
	$\Delta $ is the Laplacian,
	and $ f: \mb{R} \ra \mb{R}$ is a given nonlinear map.
	In particular, the investigation of positive solutions of \eqref{mainpro} has attracted significant attention \cite{lions1982existence,gidas1979symmetry,lin1994uniqueness,damascelli1999qualitative,gladiali2011bifurcation,de2019morse}.
	Here are some important examples derived from model problems for many applications in which we are interested.
	Positive solutions of problem \eqref{mainpro:d} with $ f(t) = \lambda t + t|t|^{p-1} $, $ \lambda \in [0,\lambda_1(\Omega)) $, $p\in (1,p^*)$ have been investigated from various points of view --- uniqueness, multiplicity, nondegeneracy, symmetricity, and so on \cite{gidas1979symmetry,lin1994uniqueness,damascelli1999qualitative,gladiali2011bifurcation,de2019morse},
	where
	$p^*=\infty$ when $N=1,2$ and $p^*=(N+2)/(N-2)$ when $ N\geq3 $, and
	$ \lambda_1(\Omega) $ is the first eigenvalue of $ -\Delta $ imposed on the homogeneous Dirichlet boundary value condition;
	the eigenvalue problem is understood in the weak sense at least when $ \Omega $ is not regular.
	Another important nonlinearity is $ f(t)=\varepsilon^{-2}(t-t^3) $, which corresponds to the stationary problem of the Allen-Cahn equation motivated by \cite{allen1979microscopic} and has been investigated by many researchers. The value $\varepsilon>0$ is a small parameter related to the so-called singular perturbation.
	A variational method ensures that problem \eqref{mainpro:d} with this nonlinearity has a positive solution when $ \varepsilon^{-2} \geq \lambda_1(\Omega)$.
	On the other hand, when $ \varepsilon^{-2} < \lambda_1(\Omega)$, no positive solution is admitted as we prove later.
	Despite these results, quantitative information about the positive solutions, such as their shape, has not been clarified analytically.
	Throughout this paper, $ H^k(\Omega) $ denotes the $k$-th order $ L^2 $ Sobolev space.
	We define $ \Honezero := \{ u \in H^1(\Omega) : u = 0~\mbox{on} ~ \partial \Omega\} $, with the inner product $(u, v)_{H^1_0}:=(\nabla u, \nabla v)_{L^2}$ and the norm $\| u \|_{H^1_0} := \sqrt{(u, u)_{\smash{H^1_0}}}$.
	We say that the solution $ u $ is ``positive'' if $ u>0 $ in $ \Omega $, and ``nonnegative'' if $ u\geq 0 $ in $ \Omega $.
	
	This paper is concerned with numerical verification (also known as verified numerical computation or computer-assisted proof) for positive weak solutions of problem \eqref{mainpro:d}.
 	The target weak form of \eqref{mainpro:d} will be shown explicitly at the beginning of the next section together with further regularity assumptions for nonlinearity $ f $.
	The pioneering research on numerical verification methods for partial differential equations began with \cite{nakao1988numerical,plum1991computer} and has been further developed by many researchers (see, for example, \cite{plum2008,nakao2011numerical,tanaka2017sharp} and the references therein).
	In particular, studies have revealed that methods based on several fixed point theorems for Newton operators (including their simplified versions) are greatly effective.
	This approach is closely related to our method (see Section \ref{sec:nk}).
	These methods enable us to obtain an explicit ball containing exact solutions of \eqref{mainpro:d}.
	For weak solutions, this is typically done in the sense of the norm $\left\|\cdot\right\|_{H_{0}^{1}}$, because $ \Honezero$ is a natural solution-space for \eqref{mainpro:d} in the distributional sense.
	In other words, they allow us to prove, for a numerical approximation $ \hat{u} \in \Honezero $, the existence of an exact weak solution $ u \in H^1_0(\Omega) $ satisfying 
	\begin{align}
	\label{h10error}
	\left\|u-\hat{u}\right\|_{H_{0}^{1}} \leq \rho
	\end{align}
	for an explicit error bound $ \rho $.
	Therefore, these methods have the advantage that quantitative information about solutions to a target equation is provided accurately in a strict mathematical sense.
	However, irrespective of how small the error bound is, the positivity of some solutions is not guaranteed without additional considerations.
	In particular, in the homogeneous Dirichlet case \eqref{mainpro:d}, it is possible for a solution that is verified by such methods to be negative near the boundary $\partial\Omega$.
	When $ N \leq 3 $, positivity can be verified if we have an $ H^3 $-norm evaluation $ \|u-\hat{u}\|_{H^3} $ with $\hat{u} \in H^3(\Omega)$ and an explicit bound for the embedding $ H^3(\Omega) \cap H^1_0(\Omega) \hookrightarrow C^1(\overline{\Omega})$.
	The bound can be evaluated by applying the bound for the embedding $ H^2(\Omega) \hookrightarrow C(\overline{\Omega})$ to $ u $ and its derivatives $\partial u/ \partial x_i$ ($i=1,2,\cdots,N$).	
	However, for some shapes of $ \Omega $, for example, a nonconvex polygonal domain, the regularity of the solution $ u $ is in general outside $ H^3(\Omega) $.
	Otherwise, even if $ u, \hat{u} \in  H^3(\Omega) $, evaluating $ \|u-\hat{u}\|_{H^3} $ itself is not easy, and troublesome numerical techniques for estimating the slope of $ \hat{u} $ are required to complete the proof of positivity.

	In previous studies, we developed the method of verifying the positivity of solutions of \eqref{mainpro:d} \cite{tanaka2015numerical,tanaka2017numerical,tanaka2017sharp}.
	These methods succeeded in verifying the existence of positive solutions by checking simple conditions, but they required $ L^{\infty} $-error estimation obtained by considering the embedding from a solution set with $ H^2 $-regularity.
	This is primarily for these methods because they need to find a subdomain where $ u $ may be negative and to evaluate the first eigenvalue of $ -\Delta $ on this subdomain.
	For the same reasons as mentioned above, this requirement narrows the range of applications of these methods.
	
	The main contribution of this paper is proposing methods for verifying the positivity of solutions $ u $ of \eqref{mainpro:d} while assuming only $ H^1_0 $-error estimation \eqref{h10error}, which is more generally applicable (wider class of domains, solutions that lack $ H^2 $-regularity, and so on) than previous methods \cite{tanaka2015numerical,tanaka2017numerical,tanaka2017sharp}.
	Theorems \ref{theo1} and \ref{theo:keeppositive}, and Corollary \ref{theo:linf} enable us to verify the nonnegativity of $ u $ under suitable conditions.
	The positivity of $ u $ follows from its nonnegativity using a maximum principle under appropriate conditions, such as when $ f $ is a subcritical polynomial (see, for example, \cite{drabek2009maximum}).
	Table \ref{table:applicable} summarizes the scope of application of our theorems to the case in which $ f $ is a subcritical polynomial $ f(t) = \lambda t + \sum_{i=2}^{n(<p^*)}a_{i}t|t|^{i-1}$, $ \lambda, a_i \in \mb{R}$, $ a_i \neq 0 $ for some $ i $.
	They are also applicable for more general nonlinearities other than polynomials (see Theorems \ref{theo1} and \ref{theo:keeppositive}, and Corollary \ref{theo:linf}).
	The table shows that the coefficient $ \lambda $ of the linear term essentially affects the existence of a positive solution of \eqref{mainpro:d} and the applicability of our theorems.
	In particular, there exists no positive solution in two specific cases. 
	The first case is when $ \lambda \geq \lambda_1(\Omega)$ and $a_i \geq 0$ for all $ i $.
	This can be checked by multiplying \eqref{mainpro} with the first eigenfunction of $ -\Delta $ and integrating both sides.
	The second case is when $ \lambda < \lambda_1(\Omega)$ and $a_i \leq 0$ for all $ i $.
	This can be checked in the same way.
	Our theorems are applicable to the other cases where the problem \eqref{mainpro:d} may admit positive solutions (see again Table \ref{table:applicable}).

	\renewcommand{\arraystretch}{1.3}
	\begin{table}[h]
		\label{table:applicable}
		\caption{The applicability of our theorems to a subcritical polynomial $ f(t) = \lambda t + \sum_{i=2}^{n(<p^*)}a_{i}t|t|^{i-1}$, $ \lambda, a_i \in \mb{R} $, $ a_i \neq 0 $ for some $ i $. Here $ \lambda_1(\Omega) $ is the first eigenvalue of $ -\Delta $ imposed on the homogeneous Dirichlet boundary value condition.}
		\begin{center}
			\begin{tabular}{l|p{30mm}|p{30mm}}
				\hline
				\backslashbox[30mm]{$a_i$}{$\lambda$}&$\geq \lambda_1(\Omega)$&$<\lambda_1(\Omega)$\\
				\hline
				\hline
				$a_i \geq 0$ for all $ i $ &No positive solution& Theorem \ref{theo1}\\
				\hline
				$a_i \leq 0$ for all $ i $ &Theorem \ref{theo:keeppositive}& No positive solution\\
				\hline
				$a_i a_j < 0$ for some $ i,j $ &Corollary \ref{theo:linf}& Theorem \ref{theo1}\\
				\hline
			\end{tabular}
		\end{center}
	\end{table}
	\renewcommand{\arraystretch}{1}
	
	Assuming a certain growth condition for $ f $, Theorem \ref{theo1} provides a sufficient condition on the nonnegativity of $ u $ that can be checked only from $ H^1_0 $-evaluation as in \eqref{h10error}.
	Theorem \ref{theo1} is proved by a constructive norm estimation for the negative part $ u_{-}:= \max\left\{-u,0\right\} $ of $ u $ by considering the embedding corresponding to each exponent.
	Theorem \ref{theo:keeppositive} verifies the nonnegativity of $ u $ by imposing another inequality condition on $ f $,
	which is proved using a fundamentally different approach based on the Newton iteration.
	It ensures, under an appropriate condition, that a Newton sequence staring from a nonnegative function $ \omega \not\equiv
	0 $ remains nonnegative at every step, and as a result confirms the nonnegativity of the solution $ u $ to which it converges.
	Known as Newton-Kantorovich's theorem \cite{deuflhard1979affine, kantorovich1982functional},
	the convergence property of Newton's method on function spaces has been investigated in several studies (see, for example, \cite{argyros2008convergence, proinov2010new, argyros2012weaker} for recent results).
	However, little is known about the influence of Newton operators on the sign of functions.
	In this sense, Theorem \ref{theo:keeppositive} increases our understanding of Newton's method for elliptic equations.
	Among the cases listed in Table \ref{table:applicable}, when $ \lambda \geq \lambda_1(\Omega)$ and $a_i a_j < 0$ for some pair $ \{i,j\} $, only Corollary \ref{theo:linf} is applicable. 
	In fact, this can be applied to all the cases listed in Table \ref{table:applicable}.
	Although Corollary \ref{theo:linf} is a generalized version of a previous theorem \cite[Theorem 2.2]{tanaka2017numerical}, this still requires $ L^{\infty} $-norm estimation; in this sense, a problem still remains.
	However, Theorems \ref{theo1} and \ref{theo:keeppositive} have wide application including several important problems such as those introduced at the beginning of this section.

	The remainder of this paper is organized as follows.
	In Section \ref{sec:1}, we provide a method for proving the positivity of a solution $ u $ of \eqref{mainpro:d} whose existence is confirmed as in inequality \eqref{h10error}.
	This method does not restrict verification methods, but admits any methods that can prove the existence of a solution with $ H^1_0 $-error estimation.
	Section \ref{sec:nk} provides another positivity-validation method based on the Newton iteration that retains nonnegativity.
	Finally, in Section \ref{sec:ex}, we present numerical examples where our method is applied to elliptic problems with the nonlinearities introduced at the beginning of this section.
	
	\section{Verification of positivity --- Constructive norm estimation for $ u_{-} $ when $\lambda < \lambda_1(\Omega)$}\label{sec:1}
	
	We begin by introducing some required notation.
	We denote $ V=\Honezero $ and $ V^*=$ (the topological dual of $ V $).
	For two Banach spaces $X$ and $Y$, the set of bounded linear operators from $X$ to $Y$ is denoted by ${\mc L}(X, Y)$ with the usual supremum norm $\| T \|_{{\mc L}(X, Y)} := \sup\{ \| T u \|_{Y} / \| u \|_{X} : {u \in X \setminus \{0\}}\}  $ for $T \in {\mc L}(X, Y)$.
	The norm bound for the embedding $V\hookrightarrow L^{p+1}\left(\Omega\right)$ is denoted by $C_{p+1}$, that is, $C_{p+1}$ is a positive number that satisfies
	\begin{align}
	\label{embedding}
	\left\|u\right\|_{L^{p+1}(\Omega)}\leq C_{p+1}\left\|u\right\|_{V}~~~{\rm for~all}~u\in V,
	\end{align}
	where $p\in [1,\infty)$ when $N=1,2$ and $p\in [1,p^*]$ when $ N\geq3 $.
	Assuming that $ f $ is a $ C^1 $ function satisfying
	\begin{align*}
		&|f(t)| \leq a_0 |t|^p + b_0 \text{~~~for~all~~} t \in \mb{R}\\
		&|f'(t)| \leq a_1 |t|^{p-1} + b_1 \text{~~~for~all~~} t \in \mb{R}
	\end{align*}
	for some $ a_0,a_1,b_0,b_1\geq 0 $ and $ p<p^* $,
	we define the operator $ F $ by
	\begin{align*}
		F : \left\{\begin{array}{ccc}{u(\cdot)} & {\mapsto} & {f(u(\cdot))}, \\
		{V} & {\rightarrow} & {V^{*}}.\end{array}\right.
	\end{align*}
	We then define another operator $ \mathcal{F} : V \rightarrow V^{*}$ by $\mathcal{F}(u) :=-\Delta u-F(u) $.
	More precisely, $ \mathcal{F} $ is characterized by
	\begin{align}
		\left<\mathcal F(u),v\right> = \left(\nabla u,\nabla v\right)_{L^2} - \left<F(u),v\right>  \text{~~for~all~~} u,v \in V,
	\end{align}
	where $\left<F(u),v\right> = \int_{\Omega} f(u(x)) v(x) dx$.
	The Fr\'echet derivatives of $ F $ and $ \mc{F} $ at $ \varphi \in V $ (respectively denoted by $ {F'_{\varphi}} $ and $ {\mc F'_{\varphi}} $) are given by
	\begin{align}
		&\langle F'_{\varphi}u,v\rangle = \int_{\Omega} f'(\varphi(x))u(x) v(x) dx \text{~~for~all~~} u,v \in V
		\shortintertext{and}
		&\langle \mathcal F'_{\varphi}u,v \rangle = \left(\nabla u,\nabla v\right)_{L^2} - \langle F'_{\varphi}u,v \rangle  \text{~~for~all~~} u,v \in V. \label{def:derivativecalf}
	\end{align}
	Under this notation and assumption, we look for positive solutions $ u \in V $ of 
	\begin{align}
		\label{main:fpro}
		\mathcal{F}(u)=0,
	\end{align}
	which corresponds to the weak form of \eqref{mainpro:d}.
	We assume that some verification method succeeds in proving the existence of a solution $u \in V$ of \eqref{main:fpro} in $\ball := \left\{v\in V : \left\|v-\hat{u}\right\|_{V}\leq\rho\right\}$ for some $ \hat{u} \in V $ and $ \rho >0 $.
	
	When $\lambda < \lambda_1(\Omega)$, the following theorem is helpful for verifying the nonnegativity of $ u $.

	\begin{theo}\label{theo1}
		Let $ f $ satisfy
		\begin{align}
		\label{f:theo1}
		-f(-t)\leq \lambda t + \displaystyle \sum_{i=1}^{n}a_{i}t^{p_{i}} \text{~~for~all~~}t\geq 0
		\end{align}
		for some $ \lambda < \lambda_1(\Omega) $, nonnegative coefficients $ a_1,a_2,\cdots,a_n $, and subcritical exponents $ p_1,p_2,\cdots,p_n \in (1,p^*)$.
		If	
		\begin{align}
		\label{cond:theo1}
		\displaystyle \sum_{i=1}^{n}a_i C_{p_{i}+1}^{2}\left( \left\|\hat{u}_{-}\right\|_{L^{p_{i}+1}}+C_{p_{i}+1}\rho\right)^{p_{i}-1}<1 - \f{\lambda}{\lambda_1(\Omega)},
		\end{align}
		then the verified solution $u \in V$ of \eqref{main:fpro} in $\ball$ is nonnegative.
	\end{theo}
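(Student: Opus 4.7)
The goal is to show $\|u_-\|_V = 0$ for the negative part $u_- := \max\{-u, 0\}$ of $u$; this forces $u = u_+ \geq 0$ a.e. The first step is to use $v = u_-$ (which belongs to $V$ because $u \in V$) as a test function in the weak equation $\mc{F}(u) = 0$. Using the a.e. identities $\nabla u_+ \cdot \nabla u_- = 0$ and $u = -u_-$ on the set $\{u_- > 0\}$, a short computation converts this into the basic identity $\|u_-\|_V^2 = \int_\Omega [-f(-u_-)] u_- \, dx$. The hypothesis \eqref{f:theo1} applied pointwise with $t = u_-(x) \geq 0$ then yields $\|u_-\|_V^2 \leq \lambda \|u_-\|_{L^2}^2 + \sum_{i=1}^n a_i \|u_-\|_{L^{p_i+1}}^{p_i+1}$, and the Poincar\'e inequality $\|u_-\|_{L^2}^2 \leq \lambda_1(\Omega)^{-1}\|u_-\|_V^2$ absorbs the linear term into the left, producing the positive factor $1 - \lambda/\lambda_1(\Omega)$ guaranteed by the assumption $\lambda < \lambda_1(\Omega)$.

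The decisive step is the treatment of the higher-order terms. For each $i$ I would split $\|u_-\|_{L^{p_i+1}}^{p_i+1} = \|u_-\|_{L^{p_i+1}}^{p_i-1} \cdot \|u_-\|_{L^{p_i+1}}^{2}$, bound the second factor by $C_{p_i+1}^{2}\|u_-\|_V^2$ via the embedding \eqref{embedding}, and control the remaining $(p_i-1)$-th power a priori by a constant depending only on $\hat{u}$ and $\rho$. For the a priori control I would prove the pointwise inequality $u_-(x) \leq |u(x) - \hat{u}(x)| + \hat{u}_-(x)$ by a short case analysis on the sign of $\hat{u}(x)$; taking $L^{p_i+1}$ norms, applying the triangle inequality, and invoking \eqref{embedding} on $u - \hat{u}$ together with \eqref{h10error} yield $\|u_-\|_{L^{p_i+1}} \leq \|\hat{u}_-\|_{L^{p_i+1}} + C_{p_i+1}\rho$. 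Substituting these bounds collapses the estimate to the form $(1 - \lambda/\lambda_1(\Omega))\|u_-\|_V^2 \leq S \cdot \|u_-\|_V^2$, where $S$ is precisely the sum appearing on the left of \eqref{cond:theo1}. Since $S < 1 - \lambda/\lambda_1(\Omega)$ by hypothesis, the only possibility is $\|u_-\|_V = 0$.

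The main obstacle I foresee is the exponent bookkeeping in the second step: the splitting of $p_i+1$ into $p_i-1$ and $2$ is not ornamental but is forced by the need to retain exactly one factor of $\|u_-\|_V^2$ that can cancel against the left-hand side, while packaging the remaining $(p_i-1)$-power as an a priori constant. The pointwise comparison $u_- \leq |u-\hat{u}| + \hat{u}_-$ is the technical device that converts the $H^1_0$-type error bound \eqref{h10error} into usable $L^{p_i+1}$ control of $u_-$; calibrating the resulting constants so that the derived inequality matches \eqref{cond:theo1} sharply is the delicate point.
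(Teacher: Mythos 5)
Your proposal is correct and follows essentially the same route as the paper's proof: test the weak form with $v=u_-$, apply \eqref{f:theo1} pointwise, absorb the linear term via Poincar\'e, split $\|u_-\|_{L^{p_i+1}}^{p_i+1}=\|u_-\|_{L^{p_i+1}}^{p_i-1}\|u_-\|_{L^{p_i+1}}^{2}$ with the embedding, and control $\|u_-\|_{L^{p_i+1}}\leq\|\hat{u}_-\|_{L^{p_i+1}}+C_{p_i+1}\rho$ a priori. Your pointwise bound $u_-\leq|u-\hat{u}|+\hat{u}_-$ is only a cosmetic variant of the paper's estimate $u_-\leq\hat{u}_-+\rho\,\omega_-$ obtained from $u=\hat{u}+\rho\omega$; both yield the identical inequality \eqref{uandu_}.
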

	
	\begin{rem}
		\label{rem1:theo1}
		The polynomial
		$ f(t) = \lambda t + \sum_{i=2}^{n(<p^*)}a_{i}t|t|^{i-1} $ 
		with $ \lambda < \lambda_1(\Omega) $ and $a_i \in \mb{R} $, which were discussed in the previous section, obviously satisfies the required inequality \eqref{f:theo1}.
		Indeed, for the set of subscripts $ \Lambda_{+} $ for which $ a_i \geq 0~(i \in \Lambda_{+})$ and $ a_i < 0$ $($otherwise$)$,
		we have $-f(-t)\leq \lambda t + \sum_{i \in \Lambda_{+}}a_{i}t^{i} \text{~~for~all~~}t\geq 0 $.
		Moreover, for this polynomial, the positivity of $ u $ follows from the nonnegativity via the maximum principle $ ( $see, for example, {\rm\cite{drabek2009maximum}} for a generalized maximum principle applicable for weak solutions$ ) $.
	\end{rem}
	
	\begin{rem}
		The formula in parentheses in \eqref{cond:theo1} goes to $0$ as $\hat{u}$ approaches a nonnegative function and $ \rho \downarrow 0 $.
		Therefore, as long as verification succeeds for a nonnegative approximation $\hat{u}$ with sufficient accuracy,
		the nonnegativity of $ u $ can be confirmed using Theorem $\ref{theo1}$.
	\end{rem}

	\begin{rem}
		Even if the approximation $ \hat{u} $ is negative in some parts of $ \Omega $, if it is close enough to a nonnegative function in the sense that $\ball$ contains at least one nonnegative function,
		this theorem may work for verifying nonnegativity because it only requires the bound for $ \left\|\hat{u}_{-}\right\|_{L^{p_{i}+1}} $ small enough to satisfy \eqref{cond:theo1}.
		For the same reason, this theorem is applicable for $ \hat{u} $ whose nonnegativity is difficult to prove.
		For example, this theorem is reasonable even when long computation time is required for proving the nonnegativity of $ \hat{u} $ due to its high regularity (see Section {\rm\ref{sec:ex}}).
	\end{rem}

	\subsection*{Proof of Theorem \ref{theo1}}
	First we prove that, for $p\in (1,p^*)$,
	\begin{align}
	\left\|u_{-}\right\|_{L^{p+1}}
	\leq
	\left\|\hat{u}_{-}\right\|_{L^{p+1}}+ C_{p+1}\rho.
	\label{uandu_}
	\end{align}
	We express $u\in V$ as $ u=\hat{u}+ \rho\omega$, where $\omega\in V$ satisfies $\left\|\omega\right\|_{V}\leq 1$.
	Because $b\displaystyle \geq(a-b)_{-}(:=\max\{-(a-b),0\})$ for nonnegative numbers $a,b\in \mathbb{R}$, we have
	\begin{align*}
	0 \leq u_{-}=&\left(\hat{u}+ \rho\omega\right)_{-}
	=\left(\hat{u}_{+}-\hat{u}_{-}+ \rho\omega_{+}- \rho\omega_{-}\right)_{-}
	=\left(\hat{u}_{+}+ \rho\omega_{+}-\left(\hat{u}_{-}+ \rho\omega_{-}\right)\right)_{-}
	\leq \hat{u}_{-}+ \rho\omega_{-},
	\end{align*}
	which implies \eqref{uandu_} because $\left\|\omega_{-}\right\|_{L^{p+1}}\leq C_{p+1}\left\|\omega_{-}\right\|_{V}\leq C_{p+1}$.
	
	We then prove that the norm of $ u_{-} $ vanishes.
	Because $u$ satisfies
	\begin{align*}
	\left(\nabla u,\nabla v\right)_{L^2}=\left<F(u),v\right>{\rm~~for~all~}v\in V,
	\end{align*}
	by fixing $v=u_{-}$, we have
	\begin{align}
	\left\|u_{-}\right\|_{V}^{2} 
	\leq &\displaystyle \int_{\Omega}\left\{\lambda\left(u_{-}(x)\right)^{2}+\sum_{i=1}^{n}a_{i}\left(u_{-}(x)\right)^{p_{i}+1}\right\}dx \nonumber \\
	= &\displaystyle \lambda\left\|u_{-}\right\|_{L^{2}}^{2}+\sum_{i=1}^{n}a_{i}\left\|u_{-}\right\|_{L^{p_{i}+1}}^{p_{i}+1}\nonumber \\
	\leq &\left\{ \displaystyle \frac{\lambda}{\lambda_{1}(\Omega)} +\sum_{i=1}^{n}a_{i}C_{p_{i}+1}^{2} \left\|u_{-}\right\|_{L^{p_{i}+1}}^{p_{i}-1} \right\} \left\|u_{-}\right\|_{V}^{2}.
	\end{align}
	Inequalities \eqref{cond:theo1} and \eqref{uandu_} lead to
	\begin{align}
	\displaystyle \frac{\lambda}{\lambda_{1}(\Omega)} +\sum_{i=1}^{n}a_{i}C_{p_{i}+1}^{2} \left\|u_{-}\right\|_{L^{p_{i}+1}}^{p_{i}-1} < 1,
	\end{align}
	which ensures $ \|u_{-}\|_V = 0 $.
	Therefore, the nonnegativity of $ u $ is proved.
	\qed
	
	\medskip
	This theorem can be applied to the nonlinearity discussed at the beginning of Section \ref{sect:1} (see again Remark \ref{rem1:theo1}).
	
	\begin{cor}
		\label{cor:emden}
		Let $ f(t)= \lambda t + t|t|^{p-1} $, with $ \lambda < \lambda_1(\Omega) $ and $ p \in (1,p^*) $.
		If 
		\begin{align}
		C_{p+1}^{2}\left(\left\|\hat{u}_{-}\right\|_{L^{p+1}}+C_{p+1}\rho\right)^{p-1}<1 - \f{\lambda}{\lambda_1(\Omega)},
		\end{align}
		then the verified solution $u \in V$ of \eqref{main:fpro} in $\ball$ is positive.
	\end{cor}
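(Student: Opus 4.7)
The plan is to derive this corollary as a direct specialization of Theorem \ref{theo1}, followed by a maximum principle argument to upgrade nonnegativity to strict positivity. First I would verify that $f(t)=\lambda t+t|t|^{p-1}$ falls into the framework of Theorem \ref{theo1} with $n=1$, $a_1=1$, and $p_1=p$. For $t\geq 0$, a direct computation gives
\begin{align*}
-f(-t)=-\bigl(-\lambda t+(-t)|-t|^{p-1}\bigr)=\lambda t+t^{p},
\end{align*}
so condition \eqref{f:theo1} holds with equality. Since $\lambda<\lambda_1(\Omega)$ and $p\in(1,p^*)$, all hypotheses on the coefficients and exponents are satisfied.

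Next I would observe that condition \eqref{cond:theo1} of Theorem \ref{theo1}, specialized to $n=1$, $a_1=1$, $p_1=p$, reads
\begin{align*}
C_{p+1}^{2}\bigl(\|\hat{u}_{-}\|_{L^{p+1}}+C_{p+1}\rho\bigr)^{p-1}<1-\frac{\lambda}{\lambda_{1}(\Omega)},
\end{align*}
which is precisely the assumption of the corollary. Theorem \ref{theo1} then yields that the verified solution $u\in V$ of \eqref{main:fpro} in $\ball$ satisfies $u\geq 0$ a.e. in $\Omega$.

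Finally, I would upgrade nonnegativity to positivity using the maximum principle, as already flagged in Remark \ref{rem1:theo1}. Since $f$ is a subcritical polynomial and $u\in V$ is a nontrivial nonnegative weak solution of $-\Delta u=f(u)$ with homogeneous Dirichlet data, a generalized strong maximum principle (e.g. the version in \cite{drabek2009maximum} applicable to weak solutions) rules out $u$ vanishing on any set of positive measure in $\Omega$ unless $u\equiv 0$; the latter case is excluded because the inequality on $\|\hat{u}_-\|_{L^{p+1}}+C_{p+1}\rho$ ensures $\ball$ contains only functions that are not identically zero (otherwise the left-hand side would force $\hat{u}_-=\hat{u}=0$, contradicting the presence of a genuine solution branch). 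The only real subtlety is this last step of promoting $u\geq 0$ to $u>0$, which is why the precise formulation of the maximum principle must be chosen carefully to accommodate low regularity of $u$; everything else is an essentially algebraic specialization of Theorem \ref{theo1}.
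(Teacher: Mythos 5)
Your proof follows exactly the route the paper intends: the corollary is stated without proof as a specialization of Theorem \ref{theo1} (take $n=1$, $a_1=1$, $p_1=p$, so that \eqref{f:theo1} holds with equality and \eqref{cond:theo1} reduces to the displayed hypothesis), with positivity then obtained from nonnegativity via the maximum principle exactly as flagged in Remark \ref{rem1:theo1}. One caveat: your argument for excluding $u\equiv 0$ does not work --- the hypothesis constrains only $\|\hat{u}_{-}\|_{L^{p+1}}$ and $\rho$, so it is perfectly consistent with $\hat{u}=0$ and $0\in\ball$; nontriviality of the verified solution must come from the verification step itself (e.g.\ $\rho<\|\hat{u}\|_{V}$), a point the paper also leaves implicit.
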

	
	\section{Verification of positivity --- Newton iteration retaining nonnegativity when $\lambda \geq \lambda_1(\Omega)$}
	\label{sec:nk}
	In this section, we discuss another approach to verifying the positivity of a solution $ u $ of \eqref{main:fpro} when $ \lambda \geq \lambda_1(\Omega) $ to which Theorem \ref{theo1} is not applicable.
	For this purpose, we impose another assumption on the nonlinearity $ f $:
	\begin{align}
	\label{asm:f2}
	f(t)\geq f'(t)t ~~~\mbox{for~all}~~~ t\geq 0.
	\end{align}
	The class of functions $ f $ satisfying \eqref{asm:f2} includes polynomials, which are ignored in Theorem \ref{theo1}, of the form
	\begin{align}
	\label{ft:allen}
	f(t)=\lambda t+\displaystyle \sum_{i=2}^{n(<p^*)}a_{i}t|t|^{i-1}~~\mbox{with}~~\lambda \geq \lambda_1(\Omega)~~\mbox{and}~~a_2,\,a_3,\cdots,\,a_n \le 0.
	\end{align}
	This admits some other important cases such as $ f(t) = \varepsilon^{-2}(t-t^3)$ with $ \varepsilon>0 $.
	Recall that there exists no positive solution when $ \lambda < \lambda_1(\Omega) $ and the coefficients $ a_i $ corresponding to super-linearity are nonpositive.
	
	The method proposed in this section is based on the Newton iteration retaining nonnegativity.
	Before presenting the main theorem of this section, we introduce the affine invariant Newton-Kantorovitch theorem, which ensures the convergence of Newton's method for a ``good'' starting point $ \hat{u} $.
	This theorem has wide applicability to verification methods for nonlinear equations, including differential equations.
	We discuss the positivity of a solution $ u $ whose existence is proved via this theorem.
	\begin{theo}[\cite{deuflhard1979affine}]
		\label{theo:nk}
		Let $ \hat{u} \in V$ be some approximation of a solution $ u $ of $\mc F(u)=0$.
		Suppose that there exists some $\alpha>0$ satisfying
		\begin{align}
		\label{alpha}
		||{\mc F'_{\hat{u}}}^{-1}\mc F(\hat{u})||_V\le\alpha.
		\end{align}
		Moreover, suppose that there exists some $\beta>0$ satisfying
		\begin{align}
		\label{beta}
		||{\mc  F'_{\hat{u}}}^{-1}(\mc  F'_v-\mc  F'_w)||_{{\cal L}(V,V)}\le\beta||v-w||_V~~\mathrm{for~all~} v,w \in D,
		\end{align}
		where $D=B(\hat{u},2\alpha+\delta)$ is an open ball depending on the above value $\alpha>0$ for small $\delta>0$.
		If
		\begin{align}
		\label{alphabeta}
		\alpha\beta \le \frac{1}{2},
		\end{align}
		then there exists a solution $u \in V$ of $\mc F(u)=0$ in $\ball$ with 
		\begin{align}
		\label{nkerror}
		\rho = \frac{1-\sqrt{1-2\alpha\beta}}{\beta}.
		\end{align}
		Furthermore, $ \mc  F'_{\varphi} $ is invertible for every $ \varphi \in \oballrho$, and the solution $u$ is unique in $\overline{B}(\hat{u},2\alpha)$.
	\end{theo}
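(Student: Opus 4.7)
The plan is to run the full Newton iteration $u_{k+1} := u_k - \mc F'_{u_k}{}^{-1}\mc F(u_k)$ starting from $u_0 := \hat u$, and to dominate it by the scalar Newton iteration applied to the quadratic $\phi(t) := \tfrac{\beta}{2}t^2 - t + \alpha$. The smaller zero of $\phi$ is exactly $\rho$ in \eqref{nkerror}, and the larger is $t^{**} = \beta^{-1}(1+\sqrt{1-2\alpha\beta}) \ge 2\alpha$, which explains the origin of both the existence radius $\rho$ and the uniqueness radius $2\alpha$.

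Two preparations are needed. First, integrating \eqref{beta} along the segment from $w$ to $v$ and using the identity $\mc F(v) - \mc F(w) - \mc F'_w(v-w) = \int_0^1 (\mc F'_{w+s(v-w)} - \mc F'_w)(v-w)\,ds$ produces the quadratic residual bound
\begin{equation*}
\left\| \mc F'_{\hat u}{}^{-1}\bigl(\mc F(v) - \mc F(w) - \mc F'_w(v-w)\bigr) \right\|_V \le \tfrac{\beta}{2}\|v-w\|_V^2, \qquad v,w \in D.
\end{equation*}
Second, for any $\varphi \in \oballrho$, \eqref{beta} yields $\|\mc F'_{\hat u}{}^{-1}(\mc F'_\varphi - \mc F'_{\hat u})\|_{\mc L(V,V)} \le \beta\|\varphi - \hat u\|_V < \beta\rho = 1-\sqrt{1-2\alpha\beta} < 1$, so by the Banach perturbation lemma $\mc F'_\varphi = \mc F'_{\hat u}\bigl(I + \mc F'_{\hat u}{}^{-1}(\mc F'_\varphi - \mc F'_{\hat u})\bigr)$ is invertible and $\|\mc F'_\varphi{}^{-1}\mc F'_{\hat u}\|_{\mc L(V,V)} \le (1 - \beta\|\varphi - \hat u\|_V)^{-1}$.

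Armed with these, I would prove by induction on $k$ that $u_k \in \overline B(\hat u, t_k) \subset \oballrho$, that $\mc F'_{u_k}$ is invertible, and that $\|u_{k+1} - u_k\|_V \le t_{k+1} - t_k$, where $\{t_k\}$ is the scalar Newton sequence $t_0 := 0$, $t_{k+1} := t_k - \phi(t_k)/\phi'(t_k)$, strictly increasing to $\rho$ with $t_1 - t_0 = \alpha$. The inductive step applies the quadratic residual bound with $v = u_k$, $w = u_{k-1}$, so that $\mc F'_{u_{k-1}}(u_k - u_{k-1}) = -\mc F(u_{k-1})$ by the definition of the Newton step, yielding $\|\mc F'_{\hat u}{}^{-1}\mc F(u_k)\|_V \le \tfrac{\beta}{2}\|u_k - u_{k-1}\|_V^2$, and then combines this with the Banach-lemma bound on $\mc F'_{u_k}{}^{-1}\mc F'_{\hat u}$. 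Since $\{t_k\}$ is Cauchy with limit $\rho$, so is $\{u_k\}$, and it converges to a zero $u$ of $\mc F$ in $\ball$ by continuity of $\mc F$.

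For uniqueness in $\overline B(\hat u, 2\alpha)$, I would show that for any further zero $\tilde u$ in that ball one has $u_{k+1} - \tilde u = \mc F'_{u_k}{}^{-1}[\mc F(\tilde u) - \mc F(u_k) - \mc F'_{u_k}(\tilde u - u_k)]$; iterating the quadratic residual bound on this identity forces $u_k \to \tilde u$, hence $\tilde u = u$. The radius $2\alpha$ is precisely the threshold at which the segment from $u_k$ to $\tilde u$ still lies inside $D = \oball$ and the residual contraction still closes. The hard part is the delicate bookkeeping that couples the operator iteration to the scalar recursion $t_{k+1} - t_k \le \tfrac{\beta(t_k - t_{k-1})^2}{2(1 - \beta t_k)}$ and the verification $\sum_k(t_{k+1} - t_k) = \rho$ via $\phi(\rho) = 0$; the uniqueness step, which is non-trivial whenever $\alpha\beta < 1/2$ (so $2\alpha > \rho$), requires a slightly sharper version of the same estimate.
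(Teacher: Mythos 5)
The paper offers no proof of this theorem---it is quoted verbatim from \cite{deuflhard1979affine}---so there is nothing internal to compare against; your proposal is the classical majorization proof of the affine-invariant Newton--Kantorovich theorem, which is essentially the argument of the cited source. The existence part is correct and complete in outline: the base case $\|u_1-u_0\|_V\le\alpha=t_1-t_0$, the quadratic residual bound, the Banach-lemma estimate $\|\mc F'_\varphi{}^{-1}\mc F'_{\hat u}\|_{\mc L(V,V)}\le(1-\beta\|\varphi-\hat u\|_V)^{-1}$, and the fact that all iterates and connecting segments stay in the convex set $D=\oball$ (since $\rho\le 2\alpha$) make the coupling $\|u_{k+1}-u_k\|_V\le\beta(t_k-t_{k-1})^2/\bigl(2(1-\beta t_k)\bigr)=t_{k+1}-t_k$ close exactly. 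The one point you leave genuinely unspecified is the ``slightly sharper version'' needed for uniqueness when $\alpha\beta<\f{1}{2}$: the naive bound $\|\tilde u-u_k\|_V\le t^{**}-t_k$ does not force $u_k\to\tilde u$, because $t^{**}-t_k\to t^{**}-\rho>0$. The standard completion is to track the ratio $\theta_k:=\|\tilde u-u_k\|_V/(t^{**}-t_k)$; the identity $t^{**}-t_{k+1}=\beta(t^{**}-t_k)^2/\bigl(2(1-\beta t_k)\bigr)$, which follows from $\phi(t^{**})=0$, converts your residual estimate into $\theta_{k+1}\le\theta_k^2$, and $\theta_0\le 2\alpha/t^{**}<1$ precisely because $2\alpha<t^{**}$ strictly when $\alpha\beta<\f{1}{2}$ (the degenerate case $\alpha\beta=\f{1}{2}$ needs no refinement since then $t^{**}=\rho=2\alpha$ and $t^{**}-t_k\to0$). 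With that insertion the argument is complete and matches the cited result, including the role of the radius $2\alpha$ as the largest ball on which hypothesis \eqref{beta} is guaranteed to apply.
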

	
	The following theorem verifies the nonnegativity of $ u $ the local existence of which is confirmed by Theorem \ref{theo:nk}.
	Here, $ g \in V^* $ is called nonnegative if and only if
	\begin{align}
	\label{def:positive}
	\langle g, \varphi\rangle \geq 0 ~~ \text { for all } \varphi \in V \text { with } \varphi \geq 0.
	\end{align}
	\begin{theo}\label{theo:keeppositive}
		Suppose the following:
		\begin{enumerate}
			\setlength{\parskip}{0cm}
			\setlength{\itemsep}{0cm}
			\item $ f $ satisfies \eqref{asm:f2};
			\item there exist $ \alpha $ and $ \beta $ satisfying \eqref{alpha}, \eqref{beta}, and \eqref{alphabeta};
			\item the minimal eigenvalue $ \mu_1(\hat{u}) $ of ${\cal F}_{\hat{u}}'$ is positive, where $ \mu_1(\hat{u}) $ is given by
			\begin{align}
				\label{def:eigenvalue}
				\mu_1(\hat{u}) = \inf_{v\in V\backslash{\{0\}}} \frac{\langle {\cal F}_{\hat{u}}'v,v \rangle}{\|v\|_{L^2}^2};
			\end{align}	
			\item there exists a nonnegative function $ \omega \not\equiv
			0 $  in $ \oballrho $.
		\end{enumerate}
		Then there exists a nonnegative solution $u \in V$ of $\mc F(u)=0$ in $\overline{B}(\hat{u},\rho)$.
	\end{theo}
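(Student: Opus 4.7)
The plan is to construct a Newton sequence initialized at the nonnegative function $\omega$, show inductively that every iterate remains nonnegative, and identify its limit with the solution $u$ furnished by Theorem~\ref{theo:nk}.

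First I would establish that $\mc F'_{\varphi}$ is inverse positive for every $\varphi \in \oballrho$, meaning that $\mc F'_{\varphi} v = g$ with $g \in V^{*}$ nonnegative implies $v \geq 0$ a.e.\ in $\Omega$. By Theorem~\ref{theo:nk}, $\mc F'_{\varphi}$ is invertible throughout $\oballrho$, so its smallest eigenvalue $\mu_1(\varphi)$ (defined as in \eqref{def:eigenvalue}) never vanishes on this convex set; since $\mu_1(\hat u) > 0$ by hypothesis and $\varphi \mapsto \mu_1(\varphi)$ is continuous (via continuity of $\varphi \mapsto f'(\varphi)$ in an appropriate $L^{q}$ topology, using the growth bound assumed on $f'$), a connectedness argument gives $\mu_1(\varphi) > 0$ on all of $\oballrho$. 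Testing $\mc F'_{\varphi} v = g$ against $v_{-}$, using $(\nabla v, \nabla v_{-})_{L^2} = -\|v_{-}\|_{V}^{2}$ together with the variational characterization of $\mu_1(\varphi)$, then forces $\|v_{-}\|_{L^2} = 0$.

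Next I would set $\omega_0 := \omega$ and define the full Newton iterates $\omega_{k+1} := \omega_k - (\mc F'_{\omega_k})^{-1} \mc F(\omega_k)$, proving inductively that each $\omega_k \geq 0$. If $\omega_k \geq 0$, a direct computation of $\mc F'_{\omega_k}(\omega_k) - \mc F(\omega_k)$ recasts the defining identity for $\omega_{k+1}$ as
\begin{equation*}
\langle \mc F'_{\omega_k} \omega_{k+1}, v \rangle = \int_{\Omega} \bigl[ f(\omega_k) - f'(\omega_k)\omega_k \bigr] v \, dx \quad \text{for all } v \in V,
\end{equation*}
and the bracketed integrand is pointwise nonnegative by hypothesis~\eqref{asm:f2}; the inverse positivity obtained in the first step then yields $\omega_{k+1} \geq 0$.

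Finally, convergence of the Newton sequence to $u$ would follow by re-running the Newton--Kantorovich argument with the starting point $\omega \in \oballrho$: a standard perturbation of the constants $\alpha$ and $\beta$ shows that the iterates stay in $\overline{B}(\hat u, 2\alpha)$ and converge in $V$ to a solution of $\mc F = 0$, which by the uniqueness clause of Theorem~\ref{theo:nk} must coincide with $u$. Passing to the limit in $\omega_k \geq 0$ yields $u \geq 0$. I expect the main difficulty to lie in this last convergence step --- specifically, in controlling the Newton iterates launched from $\omega$ rather than $\hat u$ so that they remain in the domain where $\mc F'$ is invertible and the Newton--Kantorovich estimates apply uniformly; the continuous dependence of $\mu_1(\varphi)$ on $\varphi$ is the other technical point that needs careful justification from the polynomial growth bound on $f'$.
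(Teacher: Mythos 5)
Your proposal follows essentially the same route as the paper's proof: establish that $\mu_1(\varphi)>0$ throughout $\oballrho$ by continuity plus the invertibility guaranteed by Theorem~\ref{theo:nk}, deduce inverse positivity of $\mc F'_{\varphi}$ by testing against $v_{-}$, rewrite the Newton update as $\mc F_{\varphi}'^{-1}\bigl(F(\varphi)-F'_{\varphi}\varphi\bigr)$ and invoke \eqref{asm:f2} to propagate nonnegativity, then pass to the limit of the Newton sequence started at $\omega$. The only difference is that you explicitly flag and sketch the convergence of the iterates launched from $\omega$ rather than $\hat u$ (via the uniqueness clause of Theorem~\ref{theo:nk}), a step the paper's proof asserts without elaboration.
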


	\begin{rem}
	Assumption $ 2 $ can be replaced with another condition which proves the convergence of Newton's method in a neighborhood around $ \hat{u} $ and the invertibility $ \mc  F' $ in the whole of it.
	For example, in {\rm \cite{nakao2011numerical}} and the references therein, numerical verification methods using several fixed-point theorems were developed.
	By applying an appropriate fixed point theorem such as Banach's fixed point theorem to the Newton operators starting from $ \hat{u} $ on the basis of these methods, another (but probably similar) condition can be used in place of Assumption $ 2 $.
	Note that the conditions proving the convergence of simplified Newton's method, such as in {\rm \cite{plum2008,nakao2011numerical}}, are not directly replaceable with Assumption $ 2 $, because Theorem $ \ref{theo:keeppositive} $ requires the convergence property of the original Newton's method.
	\end{rem}
	
	\begin{rem}
		Although it is unknown whether Assumption $ 3 $ may hold for all nonlinearities $ f $ satisfying \eqref{asm:f2} for some $ \hat{u} $ near to a positive function,
		Assumption $ 3 $ is expected to be satisfied at least for the nonlinearity \eqref{ft:allen} with $ \hat{u} $ approximating a positive solution of \eqref{main:fpro} with sufficient accuracy because a standard variational method ensures that the desired positive solutions are least-energy solutions.
	\end{rem}
	
	\begin{rem}
		Assumption $ 4 $ must hold for us to find a nonnegative or positive solution in $\ball$.
		In practice, it is useful for us to check $ \max\left\{\hat{u},0\right\} \in \oballrho$.
	\end{rem}

	\subsection*{Proof of Theorem \ref{theo:keeppositive}}
	Theorem \ref{theo:nk} and Assumption 2 guarantee the existence of a solution $ u $ in $ \overline{B}(\hat{u}, \rho) $.
	Therefore, it remains to prove the nonnegativity of $ u $.
	
	Assumption 3 ensures that the minimal eigenvalue $ \mu_1(\varphi) $ of $ \mc{F}'_{\varphi} $ is positive for all $ \varphi \in \oballrho $. 
	Indeed, if $ \mu_1(\varphi) $ is nonpositive for some $ \varphi \in \oballrho $, then it follows that there exists $ \varphi_0 \in \oballrho $ such that $ \mu_1(\varphi_0) = 0 $,
	because $ \mu_1(\varphi) $ is continuous with respect to $ \varphi $.
	This contradicts the result from Theorem \ref{theo:nk}.
	It can be proved from the following discussion that the operator $ \mc{F}_{\varphi}^{\prime-1} $ retains nonnegativity for all $ \varphi \in \oballrho $.
	Let $ w \in V $ satisfy $ \mc{F}'_{\varphi} w \geq 0$ in the sense of \eqref{def:positive}.
	By fixing $ v = w_{-}\,(= \max\{-w,0\}) $ in \eqref{def:derivativecalf}, we have
	\begin{align*}
		0 \leq \langle \mc{F}'_{\varphi} w, w_{-} \rangle
		= - \left\|w_{-}\right\|_{V}^{2} + \int_{\Omega} f'(\varphi(x))w^2_{-}(x) dx
		= - \langle \mc{F}'_{\varphi} w_{-}, w_{-} \rangle.
	\end{align*}
	Therefore, from the definition \eqref{def:eigenvalue} of $ \mu_1(\varphi) $, we have
	\begin{align*}
		\left\|w_{-}\right\|_{L^2}^{2} \mu_1(\varphi) \leq \langle \mc{F}'_{\varphi} w_{-}, w_{-} \rangle \leq 0.
	\end{align*}
	The positivity of $ \mu_1(\varphi) $ ensures that $\left\|w_{-}\right\|_{L^2} = 0$.
	Hence, $ w $ is nonnegative.

	We next prove that the Newton iteration in $ \oballrho $ retains nonnegativity, that is, for every nonnegative $ \varphi \in \oballrho $, the Newton operator
	\begin{align}
	T(\varphi) := \varphi - \mc{F}_{\varphi}^{\prime-1}\mc{F}(\varphi)	
	\end{align}
	on $ \oballrho $ maintains nonnegativity.
	For this operator, we have
	\begin{align*}
	T(\varphi)=\varphi-\mathcal{F}_{\varphi}^{\prime-1}\mathcal{F}(\varphi)
	=\varphi-\mathcal{F}_{\varphi}^{\prime-1}\left(\mathcal{F}_{\varphi}'\varphi+F_{\varphi}'\varphi-F(\varphi)\right)
	=\mathcal{F}_{\varphi}^{\prime-1}\left(F(\varphi)-F_{\varphi}'\varphi\right).
	\end{align*}
	Because Assumption 1 ensures that $ F(\varphi)-F_{\varphi}'\varphi \geq 0 $ for $ \varphi \geq 0 $ in the sense of \eqref{def:positive}, it follows that $ T(\varphi) \geq 0$ for nonnegative $ \varphi \in \oballrho $ (recall that $ \mc{F}_{\varphi}^{\prime-1} $ retains nonnegativity).
	Therefore, Assumption 4 ensures the existence of a Newton sequence starting from nonnegative $ \omega \not\equiv 0 $ that converges to a solution $ u \geq0 $.
	\qed
	
	\medskip
	This theorem can be applied to the special case discussed at the beginning of Section \ref{sect:1}.
	\begin{cor}
		\label{cor:allen}
		Let $f(t)=\varepsilon^{-2}(t-t^3)$, with $ \varepsilon^{-2} \geq \lambda_1(\Omega) $.
		If Assumptions $2,3,$ and $4$ in Theorem $\ref{theo:keeppositive}$ hold, then there exists a positive solution $u \in V$ of $\mc F(u)=0$ in $\overline{B}(\hat{u},\rho)$.
	\end{cor}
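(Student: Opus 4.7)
The plan is to recognize this corollary as a direct specialization of Theorem \ref{theo:keeppositive} to the Allen--Cahn nonlinearity. Since Assumptions 2, 3, and 4 are supplied as hypotheses, the only things left to establish are (i) that $f(t) = \varepsilon^{-2}(t - t^3)$ satisfies Assumption 1, i.e.\ \eqref{asm:f2}, and (ii) that the nonnegative solution delivered by Theorem \ref{theo:keeppositive} is in fact strictly positive in $\Omega$.

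For (i), a direct computation gives $f'(t) = \varepsilon^{-2}(1 - 3t^2)$, so
\begin{align*}
f(t) - f'(t)\, t \;=\; \varepsilon^{-2}(t - t^3) - \varepsilon^{-2}(t - 3t^3) \;=\; 2\varepsilon^{-2} t^3 \;\geq\; 0 \quad \text{for all } t \geq 0,
\end{align*}
which is exactly \eqref{asm:f2}. Invoking Theorem \ref{theo:keeppositive} then yields a nonnegative weak solution $u \in V$ of $\mc F(u) = 0$ in $\ball$. For (ii), I would rewrite the equation as $-\Delta u + c(x)\, u = 0$ with the coefficient $c(x) := \varepsilon^{-2}(u(x)^2 - 1)$, and apply a weak strong maximum principle such as the one cited in Remark \ref{rem1:theo1} via \cite{drabek2009maximum}. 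To rule out the trivial case $u \equiv 0$, I would appeal to the uniqueness statement in Theorem \ref{theo:nk}: the solution in $\overline{B}(\hat{u}, 2\alpha)$ is unique, so whenever the approximation $\hat{u}$ is chosen so that $\|\hat{u}\|_V > 2\alpha$ (which is implicit in verifying a genuinely nontrivial solution), the zero function lies outside the uniqueness ball and hence $u \not\equiv 0$.

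The main obstacle I expect is step (ii): for merely weak solutions on possibly nonsmooth domains, one must be careful about which version of the strong maximum principle applies, since classical formulations require smoothness that may not be available here. Fortunately, the cubic nonlinearity is subcritical in all relevant dimensions, so elliptic bootstrap gives sufficient regularity for $c(x)$ and the generalized weak maximum principle of \cite{drabek2009maximum}, already invoked in Remark \ref{rem1:theo1} for exactly this polynomial class, covers the situation. Beyond this bookkeeping, the corollary is an essentially immediate consequence of Theorem \ref{theo:keeppositive}.
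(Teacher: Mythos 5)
Your proposal is correct and follows essentially the same route as the paper: verify that $f(t)-f'(t)t=2\varepsilon^{-2}t^{3}\geq 0$ for $t\geq 0$ so that \eqref{asm:f2} holds, invoke Theorem \ref{theo:keeppositive} for nonnegativity, and pass to strict positivity via the maximum principle of \cite{drabek2009maximum}. Your additional remarks on ruling out $u\equiv 0$ and on the regularity needed for the maximum principle are sensible refinements of details the paper leaves implicit, but they do not change the argument.
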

	\begin{proof}
		The polynomial $ f $ satisfies \eqref{asm:f2}.
		Therefore, Theorem \ref{theo:keeppositive} ensures the nonnegativity of $ u $.
		Its positivity follows from the maximum principle (see, for example, {\rm\cite{drabek2009maximum}}).
	\end{proof}
	
	\section{Numerical examples}\label{sec:ex}
	In this section, we present examples in which the positivity of solutions to \eqref{main:fpro} are verified via our method.
	All computations were implemented on a computer with 2.90 GHz Intel Core(TM) i9-7920X CPU, 128 GB RAM, and Ubuntu 18.04 using MATLAB 2018a with GCC version 6.3.0.
	All rounding errors were strictly estimated using the toolboxes INTLAB version 10.2 \cite{rump1999book} and kv library version 0.4.47 \cite{kashiwagikv}.
	We constructed approximate solutions of \eqref{main:fpro} for $ \Omega=(0,1)^2 $ from a Legendre polynomial basis.
	More concretely, we constructed $\hat{u}$ as
	\begin{align}
	\displaystyle \hat{u}(x,y)=\sum_{i=1}^{N}\sum_{j=1}^{N}u_{i,j}\phi_{i}(x)\phi_{j}(y),~~u_{i,j}\in \mathbb{R},
	\end{align}
	where each $\phi_{n}$ ($ n=1,2,3,\cdots $) is defined by
	\begin{align}
	\displaystyle \phi_{n}(x)=\frac{1}{n(n+1)}x(1-x)\frac{dQ_{n}}{dx}(x) \text{~~with~~}
	Q_{n}=\displaystyle \frac{(-1)^{n}}{n!}\left(\frac{d}{dx}\right)^{n}x^{n}(1-x)^{n},~~n=1,2,3,\cdots.
	\end{align}
	We define a finite dimensional subspace $ V_N~(\subset V) $ as the tensor product $V_N = \text{span\,}\{\phi_{1},\phi_{2},\cdots,\phi_{N}\} \otimes \text{span\,}\{\phi_{1},\phi_{2},\cdots,\phi_{N}\} $, then defining the orthogonal projection $ P_N $ from $ V $ to $ V_N $ by
	\begin{align*}
		(v- P_N v,v_N)_V = 0 \text{~~~for all~} v \in V \text{~and~} v_N \in V_N.
	\end{align*}
	We used \cite[Theorem 2.3]{kimura1999on} to obtain an explicit interpolation-error constant $ C_N $ satisfying
	\begin{align}
		\label{eq:interpolation}
		\left\|v-P_{N}v\right\|_{V}\leq C_{N}\left\|\Delta v\right\|_{L^{2}} \text{~~for~all~~} v \in V \cap H^2(\Omega).
	\end{align}
	Recall that our method does not limit the basis functions that constitute approximate solutions,
	being applicable to many kinds of bases other than the Legendre polynomial basis, such as the piecewise linear finite element basis or the Fourier basis, etc.
	
	We proved the existence of solutions $u$ of \eqref{main:fpro} in $ \ball $ using Theorem \ref{theo:nk}.
	The key constants $ \alpha $ and $ \beta $ were estimated by
	\begin{align*}
	\alpha \leq \|\mathcal{F}_{\hat{u}}^{\prime-1}\|_{\mathcal{L}\left(V^{*}, V\right)}\|\mathcal{F}(\hat{u})\|_{V^{*}} \text{~~~and~~~} 
	\beta \leq \|\mathcal{F}_{\hat{u}}^{\prime-1}\|_{\mathcal{L}\left(V^{*}, V\right)} L,
	\end{align*}
	where $ L $ is a positive number satisfying
	\begin{align*}
	\left\|F_{v}^{\prime}-F_{w}^{\prime}\right\|_{\mathcal{L}\left(V, V^{*}\right)} \leq L\|v-w\|_{V} \text {~~for all~~} v, w \in D.
	\end{align*}
	In the following examples,
	the inverse norm $ \|\mathcal{F}_{\hat{u}}^{\prime-1}\|_{\mathcal{L}\left(V^{*}, V\right)} $ was estimated using the method described in \cite{tanaka2014verified, liu2015framework} in the finite dimensional subspace $ V_N $.
	Moreover, we evaluated the upper bound on $ \|\mathcal{F}(\hat{u})\|_{V^{*}} $ by $C_2 \|\mathcal{F}(\hat{u})\|_{L^2} $,
	where $C_2$ is the constant of embedding $ \Ltwo \hookrightarrow V^* $ which in fact coincides with the constant of embedding $ V \hookrightarrow \Ltwo $ (see, for example, \cite{plum2008}).	
	This $ L^2$-norm was computed using a numerical integration method with strict estimation of rounding errors using \cite{kashiwagikv}.
	The embedding constant $ C_2 $ was calculated as $ C_2=(2 \pi^2)^{-\f{1}{2}} \approx 0.2251$ with strict estimation of rounding errors.
	Other embedding constants $ C_{p+1}~(p>1) $ were evaluated via the formula in \cite[Corollary A.2]{tanaka2017sharp}.
	
	For our first example, 
	we consider the problem of finding positive solutions to Emden's equation
	\begin{align}
	\left\{\begin{array}{l l}
	-\Delta u=u\left|u\right|^{p-1} &\mathrm{in~} \Omega,\\
	u=0 &\mathrm{on~} \partial\Omega
	\end{array}\right.\label{emden}
	\end{align}
	with $p=3, 5$,
	the approximate solutions of which are displayed in Fig.~\ref{fig1}.
	The Lipschitz constant $ L $ was estimated as
	\begin{align}
	\label{lip:emden}
	L \leq p(p-1) C_{p+1}^{3}\left(\|\hat{u}\|_{L^{p+1}}+C_{p+1} r\right)^{p-2},~~r=2\alpha+\delta~~{\rm for~small}~~\delta>0
	\end{align}
	via a simple calculation from the definition, where we set $ r $ to be the next floating-point number of $ 2\alpha $.
	We verified the positivity of the verified solutions $ u $ using Corollary \ref{cor:emden} (or Theorem \ref{theo1}).
	Table \ref{tab1} shows the verification result. 
	In all cases, the positivity of the verified solutions $ u $ were confirmed under the condition $ C_{p+1}^{2}\left(\left\|\hat{u}_{-}\right\|_{L^{p+1}}+C_{p+1}\rho\right)^{p-1} < 1 $.
	It should be noted that the positivity of the approximation $ \hat{u} $ was not proved but, in stead of it, upper bounds for $ \left\|\hat{u}_{-}\right\|_{L^{p+1}} $ were roughly estimated by dividing the domain $ \Omega $ into $ 2^{14} $ smaller congruent squares and implementing interval arithmetic on each of them.
	Whereas one can infer from the shapes of $ \hat{u} $ displayed in Fig.~\ref{fig1} that $ \hat{u} $ ($ p=3,5 $) are positive in $ \Omega $,
	we only used the rough estimation of the negative part to avoid proof of the positivity,
	because it requires much computational cost such as the rigorous calculation for the slope of $ \hat{u} $ near the boundary $ \partial \Omega $.
	
	\newcommand{\sizee}{0.49\hsize}
	\newcommand{\vminus}{\vspace{0mm}}
	\begin{figure}[H]
		\begin{minipage}{\sizee}
			\begin{center}
				\vminus\includegraphics[height=45 mm]{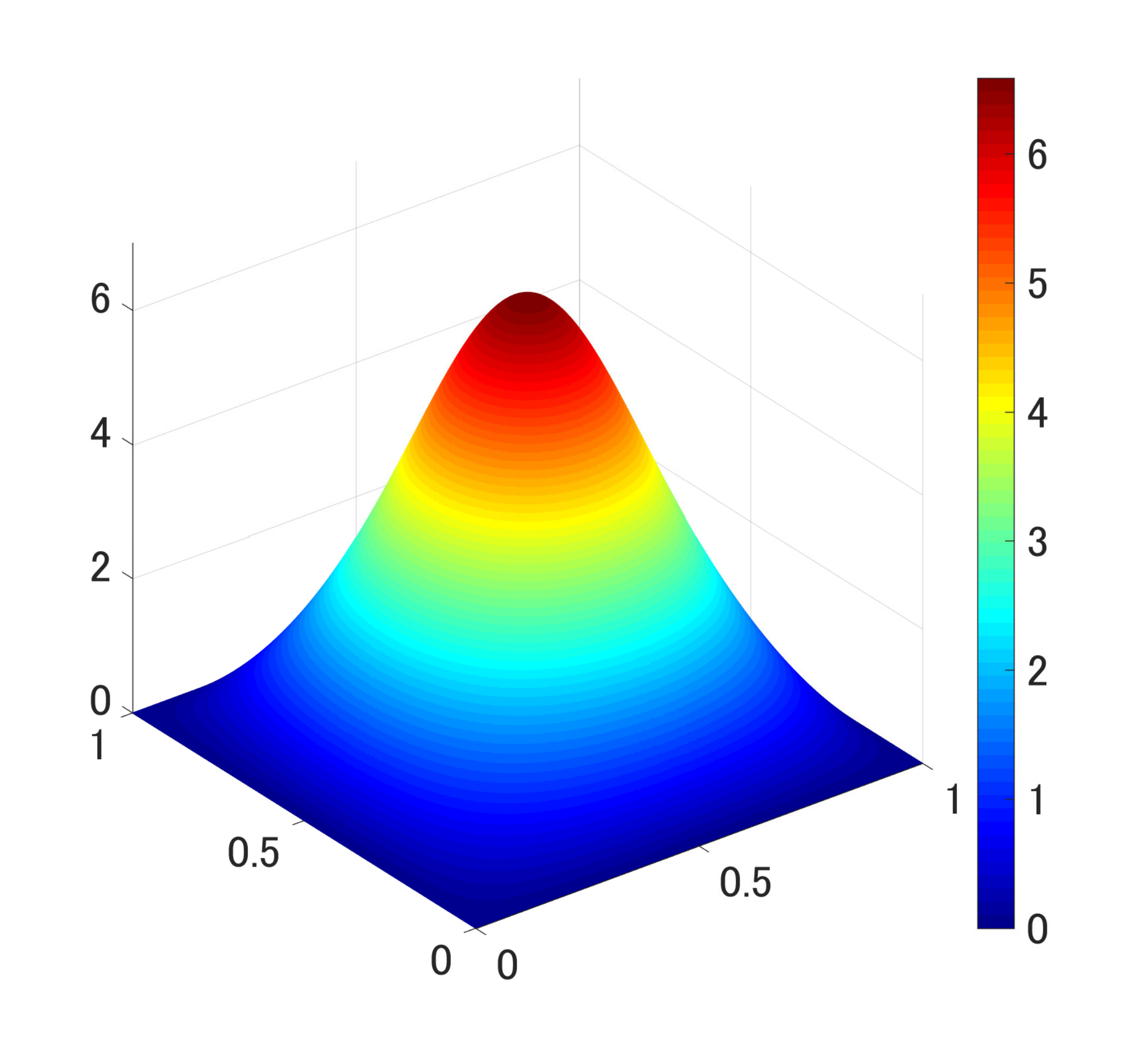}\\
				\footnotesize$p=3$,~~$\displaystyle \max_{x\in\Omega}\hat{u}(x)\approx 6.6232$
			\end{center}
			~
		\end{minipage}
		\begin{minipage}{\sizee}
			\begin{center}
				\vminus\includegraphics[height=45 mm]{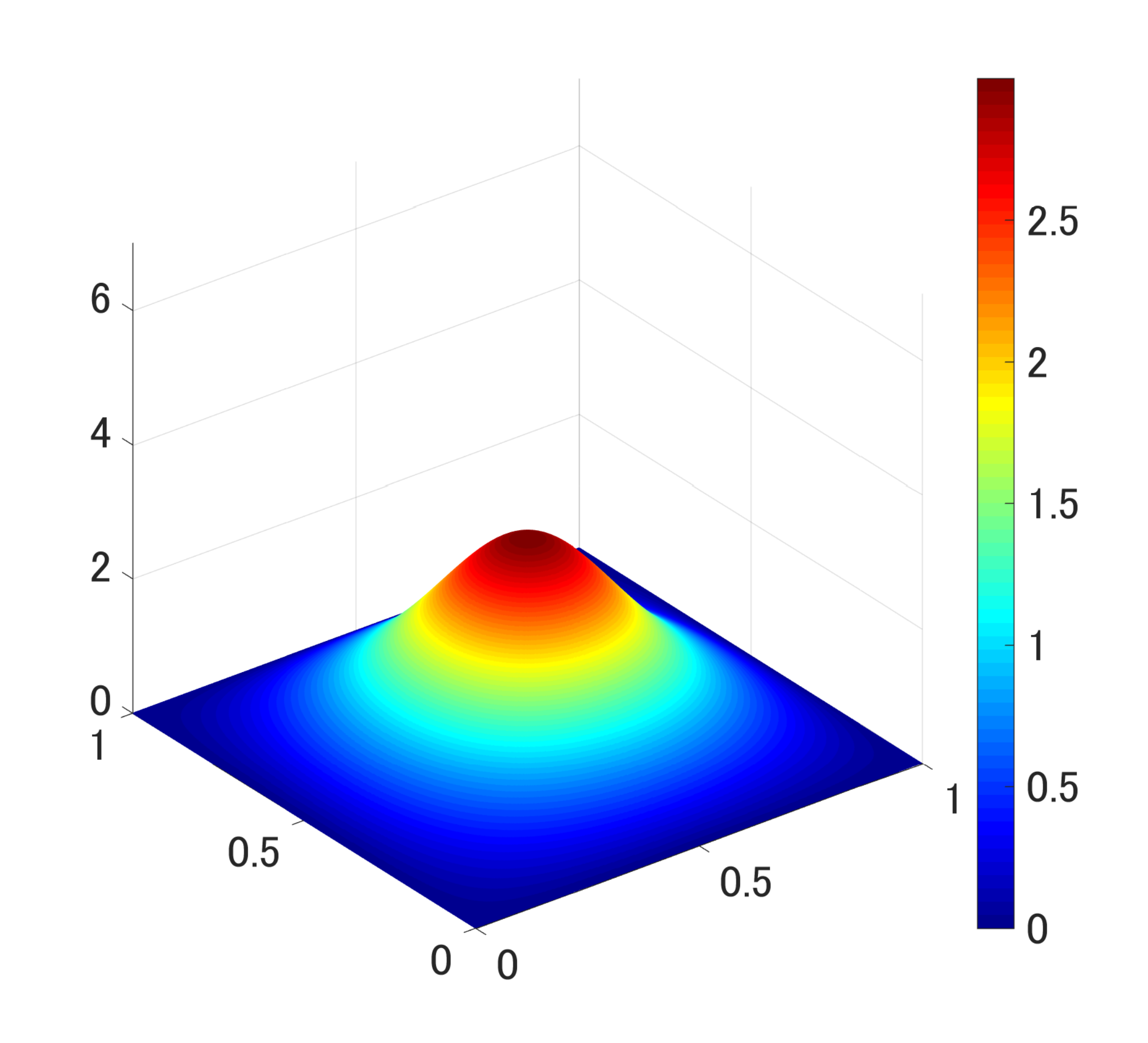}\\
				\footnotesize$p=5$,~~$\displaystyle \max_{x\in\Omega}\hat{u}(x)\approx 3.1721$
			\end{center}
			~
		\end{minipage}
		\caption{Approximate solutions to \eqref{emden} on $\Omega=(0,1)^{2}$ for $p=3, 5$.}
		\label{fig1}
	\end{figure}

	\begin{table}[H]
	\caption{Verification results for \eqref{emden} on $\Omega=(0,1)^{2}$ for $p=3, 5$. The values (except for those in row $ N $) represent strict upper bounds in decimal form.}
	\label{tab1}
	\begin{center}
		\renewcommand\arraystretch{1.3}
		\footnotesize
		\begin{tabular}{lll}
			\hline
			$p$& 3& 5\\
			\hline
			\hline
			$N$& 40 & 40 \\
			$ \|\mathcal{F}_{\hat{u}}^{\prime-1}\|_{\mathcal{L}\left(V^{*}, V\right)} $&1.70325176&2.36317681\\
			$ \|\mathcal{F}(\hat{u})\|_{V^{*}} $&$ 2.64173615\ten{-8} $&$ 1.92671579 \ten{-3} $\\
			$ L $&$ 0.67839778 $&$ 6.47198581 $\\
			$\alpha$& $ 4.49954173\ten{-8} $& $ 4.55317005\ten{-3} $\\
			$\beta$& $ 1.15548221 $& $ 15.2944468 $\\
			$\rho$& $ 4.63295216\ten{-8} $&	$ 5.47604979\ten{-3} $\\
			$C_{p+1}$&	$ 0.31830989 $& $ 0.39585400 $\\
			$ \left\|\hat{u}_{-}\right\|_{L^{p+1}} $&$ 4.19109326\ten{-2} $&$ 4.81952900\ten{-2} $\\
			$ C_{p+1}^{2}\left(\left\|\hat{u}_{-}\right\|_{L^{p+1}}+C_{p+1}\rho\right)^{p-1} $& $ 1.77973446\ten{-4} $&$ 1.00813027\ten{-6} $\\
			\hline
		\end{tabular}
	\end{center}
\end{table}
	
	In our next example, we consider the stationary problem of the Allen-Cahn equation
	\begin{align}
	\left\{\begin{array}{l l}
	-\Delta u=\varepsilon^{-2}(u-u^3) &\mathrm{in~} \Omega,\\
	u=0 &\mathrm{on~} \partial\Omega,
	\end{array}\right.\label{pro:allen}
	\end{align}
	where $\varepsilon>0$.
	We constructed approximate solutions $ \hat{u} $ of this problem using a Legendre polynomial basis in the same way, obtaining the figures displayed in Fig.~\ref{fig3}.
	The Lipschitz constant $ L $ was estimated as
	\begin{align*}
	L \leq 6 \varepsilon^{-2} C_{4}^{3}\left(\|\hat{u}\|_{L^{4}}+C_{4} r\right),~~r=2\alpha+\delta~~{\rm for~small}~~\delta>0
	\end{align*}
	in the same manner as \eqref{lip:emden} with $ p=3 $.
	Using Theorem \ref{theo:nk}, we again obtained $ H^1_0 $-error estimations for solutions of \eqref{pro:allen} centered around these approximations.
	Table \ref{tab3} shows the verification results for $\varepsilon=0.1$, $0.05$, and $0.025$.
	The positivity of the verified solutions was confirmed on the basis of Corollary \ref{cor:allen} (or Theorem \ref{theo:keeppositive}),
	where the required condition $\mu_1(\hat{u}) > 0$ was ensured in all cases.
	The lower bounds on $\mu_1(\hat{u})$ were computed numerically by estimating all rounding errors using the method in \cite{liu2015framework} with the interpolation-error constant $ C_N $ satisfying \eqref{eq:interpolation}.
	Note that proving the positivity of $ \hat{u} $ was also ignored in this example.

	\renewcommand{\sizee}{0.325\hsize}
	\begin{figure}[H]
		\begin{minipage}{\sizee}
			\begin{center}
				\vminus\includegraphics[height=36 mm]{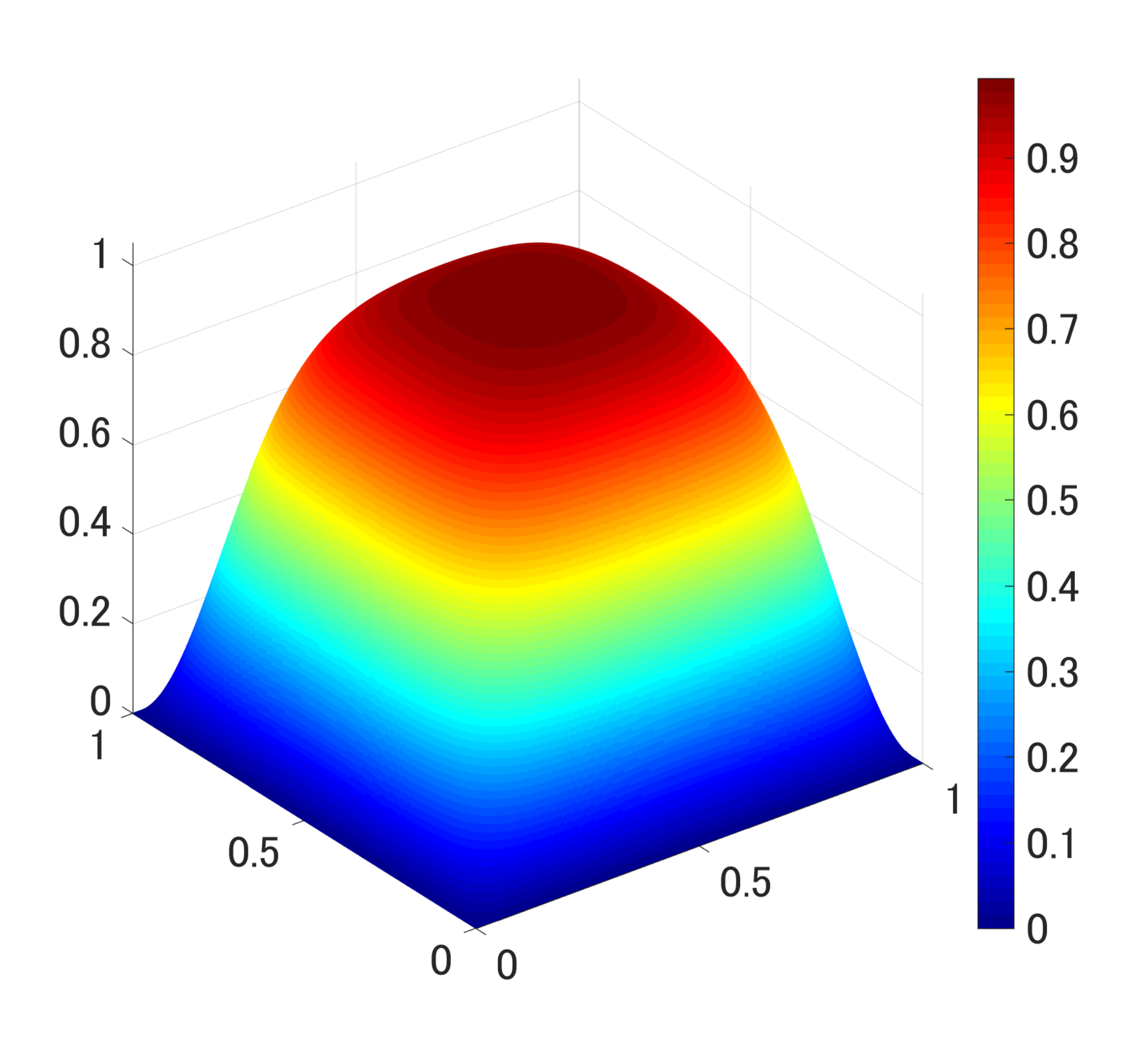}\\
				\footnotesize{$\varepsilon=0.1$}
			\end{center}
			~
		\end{minipage}
		\begin{minipage}{\sizee}
			\begin{center}
				\vminus\includegraphics[height=36 mm]{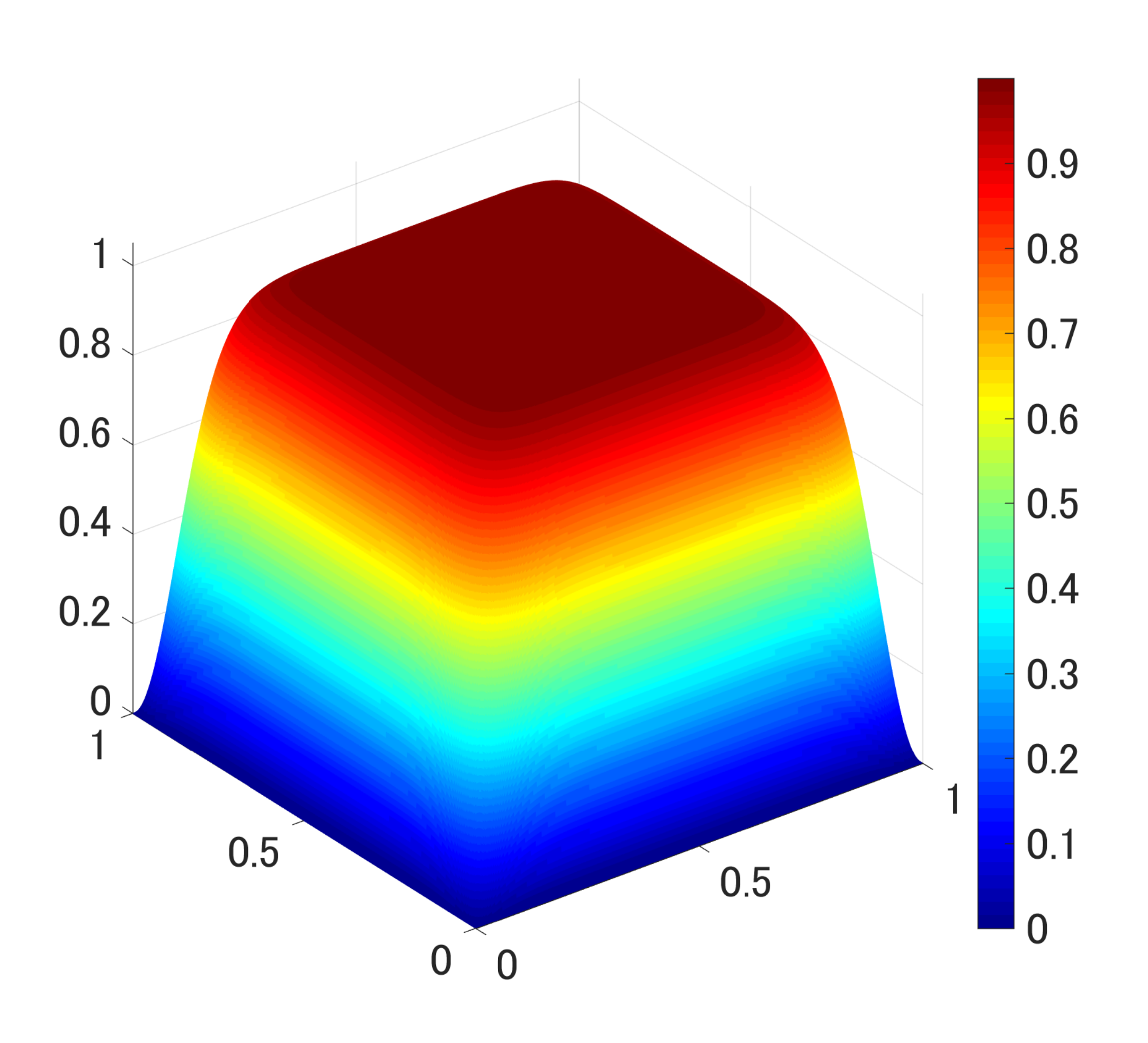}\\
				\footnotesize{$\varepsilon=0.05$}
			\end{center}
			~
		\end{minipage}
		\begin{minipage}{\sizee}
			\begin{center}
				\vminus\includegraphics[height=36 mm]{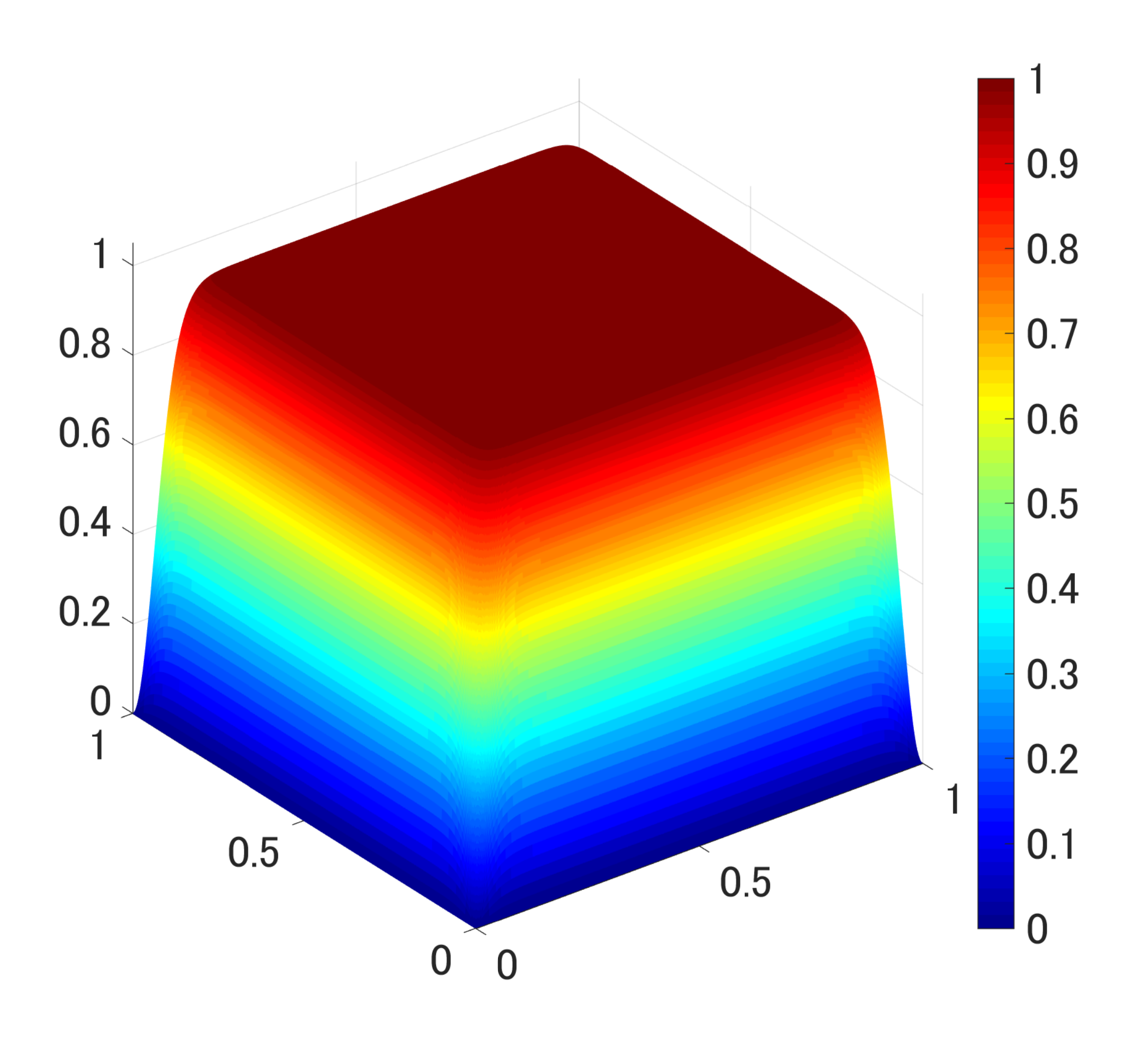}\\
				\footnotesize{$\varepsilon=0.025$}
			\end{center}
			~
		\end{minipage} 
		\caption{Approximate solutions to \eqref{pro:allen} on $\Omega=(0,1)^{2}$ for $\varepsilon=0.1$, $0.05$, and $0.025$.}
		\label{fig3}
	\end{figure} 
	
	\begin{table}[H]
		\caption{Verification results for \eqref{pro:allen} on $\Omega=(0,1)^{2}$ for $\varepsilon=0.1$, $0.05$, and $0.025$. The values (except for those in rows $ N $ and $\mu_1(\hat{u})$) represent strict upper bounds in decimal form. The values in row $\mu_1(\hat{u})$ represent strict lower bounds in decimal form.}
		\label{tab3}
		\begin{center}
		\renewcommand\arraystretch{1.3}
		\footnotesize
		\begin{tabular}{llll}
			\hline
			$\varepsilon$& 0.1& 0.05&0.025\\
			\hline
			\hline
			$N$& 40 & 40 & 60\\
			$ \|\mathcal{F}_{\hat{u}}^{\prime-1}\|_{\mathcal{L}\left(V^{*}, V\right)} $&$2.85871420$&$4.57367687$&$26.8239159$\\
			$ \|\mathcal{F}(\hat{u})\|_{V^{*}} $&$5.57390453\ten{-10}$&$2.15869521\ten{-6}$&$1.99428443\ten{-6}$\\
			$ L $&$3.00408573$&$5.02704780$&$7.57229904$\\
			$\alpha$&$ 1.59342000\ten{-9}$&$ 9.87317430\ten{-6}$&$5.34945174\ten{-5}$\\
			$\beta$&$ 8.58782250$&$ 22.9920923$&$2.03118712\ten{+2}$\\
			$\rho$&$ 1.59342002\ten{-9}$&$	9.87429519\ten{-6}$&$5.37883476\ten{-5}$\\
			$\mu_1(\hat{u}) \geq$&$ 1.13045870\ten{+2}$&$3.89094819\ten{+2}$&$1.18578735\ten{+3}$\\
			\hline
		\end{tabular}
		\end{center}
	\end{table}

	\section{Conclusion}
	In this paper, we have proposed methods for verifying the positivity of weak solutions $ u $ of the elliptic problem \eqref{mainpro:d} (namely, solutions of \eqref{main:fpro}) using the $ H^1_0 $-error estimation $ \left\|u-\hat{u}\right\|_{H_{0}^{1}} \leq \rho $ for some numerical approximation $ \hat{u} \in V $ and an explicit error bound $ \rho $.
	Theorem \ref{theo1} and \ref{theo:keeppositive} provide sufficient conditions for the solution $ u $ to be nonnegative.
	Using the maximum principle \cite{drabek2009maximum}, the positivity of $ u $ follows from the nonnegativity.
	Our theorems have a wide range of applications, including several important problems such as the elliptic problem \eqref{mainpro:d} with polynomial nonlinearities.
	Numerical examples confirmed that our approach works effectively for several important problems.
	\appendix
	\renewcommand{\thetheo}{\Alph{section}.\arabic{theo}}
	\section{The case $\lambda \geq \lambda_1(\Omega)$ and coefficients with different signs}
	In this section, we discuss the method for verifying positivity in the case where the approaches proposed in Sections \ref{sec:1} and \ref{sec:nk} are not applicable.
	This admits the nonlinearity of the form
	\begin{align}
		\label{ft:appendix}
		f(t)=\lambda t+\displaystyle \sum_{i=2}^{n(<p^*)}a_{i}t|t|^{i-1}~~\mbox{with}~~\lambda \geq \lambda_1(\Omega)~~\mbox{and~coefficients~satisfying}~~a_i a_j < 0~~\mbox{for~some~pair}~\{i,j\}.
	\end{align}
	However, this requires $ L^{\infty} $-norm estimation of the desired solution $ u \in V \cap L^{\infty}(\Omega) $, namely information about
	\begin{align}
	\label{linferror}
	\left\|u-\hat{u}\right\|_{L^{\infty}} \leq r
	\end{align}
	with $ \hat{u} \in V \cap L^{\infty}(\Omega) $ and $ r>0 $,
	which can be derived for highly-regular domains $ \Omega $ such as bounded convex polygonal domains using, for example, the method described in \cite{plum2001computer}.
	We define a subset $ \Omega_0 $ of $ \Omega $ where $ u $ may be negative by
	\begin{align*}
	\Omega_{0}=\text{Interior~of~}\left[\Omega\backslash\left\{x\in\Omega : \hat{u}-r>0\right\}\right].
	\end{align*}
	The following corollary is quite similar to Theorem \ref{theo1}.
	However, the assumption on $ \lambda $ is weakened while requiring evaluation of a lower bound for $ \lambda_{1}(\Omega_{0}) $.
	
	\begin{cor}\label{theo:linf}
		The same argument as used in Theorem $ \ref{theo1} $ follows by replacing $ \lambda_1(\Omega) $ with $ \lambda_1(\Omega_0) $.
	\end{cor}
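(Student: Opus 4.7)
The plan is to transplant the proof of Theorem \ref{theo1} to the subdomain $\Omega_0$, exploiting the $L^{\infty}$-error estimate \eqref{linferror} to localize the negative part of $u$. The only ingredient in that proof that uses the specific domain $\Omega$ (as opposed to embedding constants) is the Poincar\'e-type bound $\|u_{-}\|_{L^2}^2 \leq \lambda_1(\Omega)^{-1}\|u_{-}\|_V^2$, so it suffices to sharpen this bound on the support of $u_{-}$.

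First, I would verify that $u_{-}$ is supported in $\overline{\Omega_0}$. By the definition of $\Omega_0$, every $x \in \Omega \setminus \overline{\Omega_0}$ satisfies $\hat{u}(x) - r > 0$; combined with $\|u - \hat{u}\|_{L^{\infty}} \leq r$, this yields $u(x) \geq \hat{u}(x) - r > 0$ almost everywhere on $\Omega \setminus \overline{\Omega_0}$, and therefore $u_{-}(x) = 0$ there. Since $u_{-} \in V$ and $u_{-}$ vanishes outside $\Omega_0$, the restriction belongs to $H^1_0(\Omega_0)$, with the energy identity $\|u_{-}\|_{H^1_0(\Omega_0)} = \|u_{-}\|_V$.

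Second, applying the Poincar\'e inequality on $\Omega_0$ gives
\begin{align*}
\|u_{-}\|_{L^2(\Omega)}^2 \;=\; \|u_{-}\|_{L^2(\Omega_0)}^2 \;\leq\; \lambda_1(\Omega_0)^{-1}\|u_{-}\|_V^2.
\end{align*}
With this replacement in hand, I would repeat the computation from the proof of Theorem \ref{theo1} verbatim: test the weak equation against $v = u_{-}$, use \eqref{f:theo1} to control the nonlinear terms, and bound the $L^{p_i+1}$ norms via the embedding constants $C_{p_i+1}$ (which remain valid as global embedding constants on $\Omega$, irrespective of the support), together with the pointwise estimate $\|u_{-}\|_{L^{p+1}} \leq \|\hat{u}_{-}\|_{L^{p+1}} + C_{p+1}\rho$ from \eqref{uandu_}, which is independent of the domain. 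The analogue of the hypothesis \eqref{cond:theo1}, with $\lambda_1(\Omega_0)$ in place of $\lambda_1(\Omega)$, then forces $\|u_{-}\|_V = 0$.

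The main obstacle is not in the analysis itself, which is essentially a substitution, but in the preconditions: one must secure a rigorous $L^{\infty}$-error bound of the form \eqref{linferror}, which requires higher regularity of $\hat{u}$ and $u$ and more elaborate numerical machinery (as in \cite{plum2001computer}), and one must compute a verified lower bound for $\lambda_1(\Omega_0)$ on a subdomain whose geometry is determined implicitly by $\hat{u}$ and $r$ and may be highly irregular. Since $\Omega_0 \subsetneq \Omega$, the monotonicity $\lambda_1(\Omega_0) > \lambda_1(\Omega)$ gives hope that $\lambda < \lambda_1(\Omega_0)$ may hold even when $\lambda \geq \lambda_1(\Omega)$, which is precisely what opens up the regime addressed by the corollary.
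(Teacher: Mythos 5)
Your proposal is correct and follows the same route as the paper: the $L^{\infty}$-bound forces $u_{-}$ to vanish outside $\Omega_0$, so $u_{-}\in H^1_0(\Omega_0)$ and the Poincar\'e constant $\lambda_1(\Omega)$ in the proof of Theorem \ref{theo1} can be replaced by $\lambda_1(\Omega_0)$, with all other steps unchanged. Your write-up simply makes explicit the details the paper leaves implicit.
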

	
	\begin{proof}
		The definition of $ \Omega_{0} $ ensures that the negative part $ u_{-} $ of the verified solution $ u $ belongs to $ H^1_0(\Omega_{0}) $.
		Therefore, replacing $ \lambda_1(\Omega) $ with $ \lambda_1(\Omega_0) $ in the proof of Theorem \ref{theo1} maintains the correctness of the proof.
	\end{proof}
	\begin{rem}
		In the same manner as mentioned in Remark $\ref{rem1:theo1}$,
		by extracting nonnegative coefficients $ a_i $, 
		the polynomial \eqref{ft:appendix} is confirmed to satisfy
		the required inequality \eqref{f:theo1}.
	\end{rem}
	\begin{rem}
		It is worth noting that $\lambda_1(\Omega'_0) \leq \lambda_1(\Omega_0) $ holds for a superset $ \Omega_0' \supset \Omega_{0} $.
		Therefore, even when $ \Omega_0$ has a complicated shape, one only has to estimate the lower bound for $\lambda_1(\Omega'_0)$ on such a superset $ \Omega_0'$ with a simple shape as long as $\lambda < \lambda_1(\Omega'_0)$.
		A lower bound for the eigenvalue $ \lambda_1(\Omega_0') $ can be numerically evaluated using the method, for example, in {\rm \cite{liu2013verified}} with a suitable basis that spans the functions over $ \Omega_{0} $, such as the finite element basis.
	\end{rem}
	
	\begin{rem}
		The range of application of Corollary {\rm \ref{theo:linf}} covers all the cases listed in Table $ \ref{table:applicable} $, as long as we have accurate $ L^{\infty} $-norm estimation as in \eqref{linferror} and can evaluate a lower bound for $ \lambda_1(\Omega_0) $ satisfying the required inequality. 
	\end{rem}

	\section*{Acknowledgments}
	We express our sincere thanks to Professor Kazunaga Tanaka (Waseda University, Japan) for helpful advice and comments about this study,
	Professor Michael Plum (Karlsruhe Institut f$\ddot{\rm u}$r Technologie, Germany) for helping us to correct a mistake in Theorem \ref{theo1},
	and Kohei Yatabe (Waseda University, Japan) for his contribution to improving English expressions of this paper.
	We also express our profound gratitude to two anonymous referees for their highly insightful comments and suggestions.
	This work is supported by JST CREST Grant Numbers JPMJCR14D4, and JSPS KAKENHI Grant Number JP17H07188 and JP19K14601, and Mizuho Foundation for the Promotion of Sciences.
	\bibliography{ref}
	
\end{document}